\documentclass[12pt]{amsart}
\usepackage[foot]{amsaddr} 
\usepackage[utf8]{inputenc}
\usepackage{amsmath, amssymb, amsthm}
\usepackage{geometry}
\usepackage{hyperref}
\usepackage{graphicx}
\usepackage{float}
\usepackage{seqsplit}
\usepackage{tikz}
\usepackage[dvipsnames]{xcolor}
\usepackage{marvosym}
\usepackage{lineno}

\newtheorem{theorem}{Theorem}[section]
\newtheorem{lemma}[theorem]{Lemma}
\newtheorem{proposition}[theorem]{Proposition}

\theoremstyle{definition}
\newtheorem{definition}[theorem]{Definition}
\newtheorem{example}[theorem]{Example}

\newtheorem{conjecture}[theorem]{Conjecture}

\usepackage{booktabs}
\usepackage{tikz}
\usepackage{float}
\usepackage{xcolor}

\usepackage{algorithm}
\usepackage{algpseudocode}
\makeatletter
\newenvironment{breakablealgorithm}
{
		\begin{center}
			\refstepcounter{algorithm}
			\hrule height.8pt depth0pt \kern2pt
			\parskip 0pt
			\renewcommand{\caption}[2][\relax]{
				{\raggedright\textbf{\fname@algorithm~\thealgorithm} ##2\par}%
				\ifx\relax##1\relax 
				\addcontentsline{loa}{algorithm}{\protect\numberline{\thealgorithm}##2}%
				\else 
				\addcontentsline{loa}{algorithm}{\protect\numberline{\thealgorithm}##1}%
				\fi
				\kern2pt\hrule\kern2pt
			}
		}
		{
		\kern2pt\hrule\relax
	\end{center}
}
\makeatother

\DeclareMathOperator{\MNM}{MNM}

\begin{document}

\title{An Algebraic Matching Algorithm for the Genus of Grid Diagrams}

\author{Jun Wang$^1$$^\textsuperscript{\Letter}$ }
\author{Shasha Yang$^1$}
\address{$^1$School of Mathematical Sciences, Hebei Normal University}
\address{$^\textsuperscript{\Letter}$Corresponding author}


\maketitle

\begin{abstract}
Grid diagrams encode knots and links combinatorially through an $n\times n$ winding matrix $W(G)$. A fundamental invariant of the grid diagram is its genus $g(G)$, which records essential topological information about the underlying knot. We give an algebraic algorithm that computes $g(G)$ directly from $W(G)$. The method repurposes the Hungarian algorithm (Kuhn, 1955; Munkres, 1957): after a column normalization that makes the matrix non-negative, it applies row–column reductions and minimum-vertex-cover corrections until the zero entries admit a perfect matching, equivalently until the minimum marking number $\mathrm{MNM}$ equals $n$. By Kőnig's theorem the resulting permutation defines a distinguished grid state, and $g(G)=\frac12\theta(H)-\frac{n-1}{2}$ is then read from this matrix via the classical grid-homology formula (Ozsváth–Stipsicz–Szabó). The novelty of this paper is the algorithm and its strongly polynomial $\mathcal{O}(n^{3})$ analysis; the underlying invariant is known grid-homology theory. We illustrate the method on the trefoil and additional examples, and discuss extensions to $m\times n$ matrices and grid-move navigation toward the knot genus $g(K)$.
\end{abstract}

\keywords{\textbf{Keywords:}~grid diagram; genus; Hungarian algorithm; knot theory}

\subjclass{\textbf{MSC 2020:}~57M25, 57M27}
\section{Introduction}

Grid diagrams provide a purely combinatorial encoding of knots and links: an
$n\times n$ grid diagram $G$ records the positions of $X$ and $O$ markers,
and the horizontal and vertical segments joining them form a knot diagram
(Cromwell~\cite{cromwell1995,cromwell2004}; Dynnikov~\cite{dynnikov2006}). 
In 2009, C.Manolescu, P.Ozsv\'ath, and S.Sarkar \cite{Manolescu2009} found a purely combinatorial description of knot floer homology by grid diagram. In 2025, C.Manolescu, P.Ozsv\'ath and D.Thurston \cite{Manolescu2025} gave combinatorial descriptions of the Heegaard Floer homology groups for arbitrary three-manifolds via the grid diagram of link.

What we consider here is the genus of grid diagram.
Each grid diagram
carries a \emph{winding matrix} $W(G)$, an $n\times n$ integer matrix (first
column and last row identically zero) that records the relative winding of the
horizontal and vertical arcs. 
A central integer of $G$ is its associated genus $g(G)$
(the genus of a Seifert surface constructed from $G$). In the grid-homology framework of
Ozsv\'ath--Stipsicz--Szab\'o~\cite{ozsvath2012}, $g(G)$ is computed from a
\emph{minimal non-negative matrix} $H$ derived from $W(G)$ by row and column
operations.
The associated genus of a grid diagram realises the Seifert genus of the underlying knot: for every knot $K$ there exists a grid diagram $G$ with $g(G)=g(K)$ (see e.g.\ Ozsv\'ath--Stipsicz--Szab\'o~\cite{ozsvath2012}); computing it directly from $W(G)$ is the central problem addressed in this paper.

The contribution of this paper is algorithmic. We show that the relevant
matrix $H$ can be produced from $W(G)$ by a repurposed Hungarian algorithm,
and that the data required are computed in
strongly polynomial $\mathcal{O}(n^{3})$ time. We do \emph{not} re-prove that
$g(G)$ is determined by $H$; our result is the \emph{computation} of $H$ and of $g(G)$.

\begin{theorem}[Main algorithmic result]\label{thm:main}
	Let $G$ be a grid diagram of index $n$ with winding matrix $W(G)$. After the
	column normalization~\eqref{eq:colnorm} and Algorithm~\ref{alg:main}, the output
	is a matrix $H$ with $\MNM(H)=n$ that is a minimal non-negative matrix in the
	sense of Ozsv\'ath--Stipsicz--Szab\'o~\cite{ozsvath2012}. The genus $g(G)$ is
	then obtained from $H$ by the closed formula of Proposition~3.4.9
	in~\cite{ozsvath2012}, whose well-definedness is guaranteed by
	Corollary~3.4.10 in~\cite{ozsvath2012}. The algorithm performs this computation
	in strongly polynomial $\mathcal{O}(n^{3})$ time.
\end{theorem}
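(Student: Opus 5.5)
The proof rests on the classical correctness argument for the Hungarian method, reinterpreted in the language of Ozsv\'ath--Stipsicz--Szab\'o. First I would record two invariants of the elementary operations used in Algorithm~\ref{alg:main}. Subtracting a constant from a row or a column of $W(G)$, and the minimum-vertex-cover correction (subtract $\delta$ from uncovered entries, add $\delta$ at doubly covered entries), are both compositions of row and column shifts. So each step stays inside the class of matrices obtained from $W(G)$ by the row and column operations permitted in~\cite{ozsvath2012}. Second, the column normalization~\eqref{eq:colnorm} makes every entry non-negative. Each row or column reduction subtracts the minimum of that line, and each cover correction subtracts the minimum uncovered entry $\delta>0$. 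Hence non-negativity is preserved throughout; this is a direct check.

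Next I would prove termination and the bound $\mathcal{O}(n^3)$. By K\H{o}nig's theorem, $\MNM$ of the current matrix equals the size of a maximum matching in the bipartite graph of zero entries. It also equals the size of a minimum vertex cover, so while $\MNM<n$ there are fewer than $n$ covering lines and some uncovered entry exists, which makes $\delta$ well defined. After a correction, every zero used in the matching lies on exactly one covering line and so remains zero, while a new zero appears in the uncovered region. Using the augmenting-path bookkeeping (labelled rows and columns, as in Munkres), the standard argument gives $\mathcal{O}(n)$ corrections per increase of the matching size, each costing $\mathcal{O}(n)$ amortized with slack arrays. That is $\mathcal{O}(n^2)$ per augmentation and $n$ augmentations, for $\mathcal{O}(n^3)$ arithmetic operations on entries of size bounded by those of $W(G)$. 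This is strongly polynomial because the operation count is independent of the entry magnitudes.

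The step I expect to be the main obstacle is identifying the output $H$, which is non-negative with $\MNM(H)=n$, as \emph{minimal} in the sense of~\cite{ozsvath2012}. My approach is to use LP duality for the assignment problem. A perfect matching of zeros gives a permutation $\sigma$ with $\sum_i H_{i\sigma(i)}=0$. For any other non-negative matrix $H'=H-r\mathbf{1}^T-\mathbf{1}c^T$ in the same row/column class, this permutation shows that $\sum r_i+\sum c_j\le 0$. I would then argue that any further decrease would force a negative entry along $\sigma$, so $H$ cannot be lowered by a nontrivial shift preserving non-negativity. If minimality in~\cite{ozsvath2012} is defined entrywise or via the total sum, I would match the definition using this dual certificate. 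The permutation $\sigma$ is exactly the distinguished grid state whose Maslov/Alexander data Proposition~3.4.9 of~\cite{ozsvath2012} reads off.

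Finally, I would evaluate $g(G)=\tfrac12\theta(H)-\tfrac{n-1}{2}$ in $\mathcal{O}(n^2)$ time. Its independence from the choice of minimal $H$, that is, from the Hungarian tie-breaking, follows from Corollary~3.4.10 of~\cite{ozsvath2012}. This completes the theorem.
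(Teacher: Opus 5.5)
Your proposal is correct, but it reaches the theorem by a different route from the paper in two places.

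\textbf{Termination and complexity.} The paper first proves that $\MNM$ never decreases, using a Hall-type inequality for minimum markings and the invariance of $\MNM$ under deleting cross zeros (Lemma~\ref{lem:hall-cover}, Propositions~\ref{prop:invariant} and~\ref{prop:monotone}). It then forces $\MNM$ to rise with the potential $S_t$, which grows by $\varepsilon(n-m)$ per correction and is bounded above by any assignment sum $\sum_i w^{2}_{i,\sigma(i)}$ (Lemmas~\ref{lem:potential-increase} and~\ref{lem:bounded}). This by itself gives only a pseudo-polynomial count. The $\mathcal{O}(n^3)$ bound is then obtained, as in your plan, by appeal to the incremental Hungarian implementation.

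You instead use that each matched zero lies on exactly one cover line, so the matching survives every correction. This holds for any maximum matching $M$ and any minimum cover $(R,C)$. Each cover line meets at most one of the vertex-disjoint matching edges, there are $|R|+|C|=|M|$ lines, and every edge must be hit, so no edge is hit twice. This makes the cross-zero machinery unnecessary. It also shows that the paper's remark that matching survival ``is false in general'' is mistaken. Combined with the labelled-tree cover, your observation yields termination and the $\mathcal{O}(n^3)$ bound in one standard argument. What the paper's potential argument buys in exchange is termination for an arbitrary choice of minimum cover, where the tree need not grow.

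\textbf{Minimality.} The paper simply cites Lemma~3.4.2 of Ozsv\'ath--Stipsicz--Szab\'o. You give a self-contained weak-duality certificate instead. It is the same inequality as the paper's Lemma~\ref{lem:bounded}, used here for minimality rather than for termination. It proves exactly the direction needed: a zero permutation implies minimality.

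\textbf{Two small corrections.}

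First, under an entrywise reading of minimality, ``any further decrease would force a negative entry along $\sigma$'' is not the right reason. A shift could a priori vanish on $\sigma$ and still be positive elsewhere. The correct argument runs as follows. The condition $H'\le H$ means $r_i+c_j\ge 0$ for all $i,j$. Non-negativity of $H'$ on $\sigma$ gives $r_i+c_{\sigma(i)}\le 0$, hence $r_i+c_{\sigma(i)}=0$. Then $(r_i+c_j)+(r_{\sigma^{-1}(j)}+c_{\sigma(i)})=(r_i+c_{\sigma(i)})+(r_{\sigma^{-1}(j)}+c_j)=0$, with both summands on the left non-negative. This forces $r_i+c_j=0$ everywhere, so $H'=H$.

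Second, the entries need not stay bounded by the input entries, since doubly covered entries grow. For example, $\left(\begin{smallmatrix}M&0&0\\0&M&M\\0&M&M\end{smallmatrix}\right)$ becomes $\left(\begin{smallmatrix}2M&0&0\\0&0&0\\0&0&0\end{smallmatrix}\right)$ after one correction. This is harmless, because strong polynomiality concerns the operation count.
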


	Termination of Algorithm~\ref{alg:main} and the $\mathcal{O}(n^{3})$ bound are
	established in Section~\ref{sec:analysis}. That the output $H$ is a minimal
	non-negative matrix, and that $g(G)$ is read from $H$, which are known results in grid homology; we recall them and
	verify that our normalization~\eqref{eq:colnorm} places us in their setting in
	Section~\ref{sec:grid}.

As an application of the algorithm, the effect of grid moves on the associated genus of grid diagram was considered in Section \ref{sec:gridmoves}.  It is known that given a grid diagram $G$ representing knot $K$, after a sequence of grid moves, we will obtain a new grid diagram $G'$ satisfying $g(G')=g(K)$.  Therefore, the study of the associated genus of grid diagram, and in particular the effect of grid moves on the associated genus, is important for the study of knot genus. In \cite{lackenby2015}, Marc Lackenby proved that any diagram of the unknot with $c$ crossings may be reduced to the trivial diagram using at most $(236c)^{11}$ Reidemeister moves. Thus we have the following conjecture.

\begin{conjecture}
Given a grid diagram $G$ representing knot $K$. How can we find a finite sequence of grid moves to obtain a grid diagram $G'$ from $G$, which satisfying $g(G')=g(K)$? Or is there an upper bound on the grid moves?
\end{conjecture}

The algebraic procedure developed here is not new in the optimisation sense:
it is the Hungarian algorithm repurposed, looking for any permutation of zeros
rather than a minimum-cost one. What is new is the \emph{application to grid
	diagrams}: the winding matrix of a grid diagram encodes its genus $g(G)$, and
Algorithm~\ref{alg:main} computes $g(G)$ algebraically, by producing the
minimal non-negative matrix from which $g(G)$ is read via the grid-homology
formula of Ozsv\'ath--Stipsicz--Szab\'o (Proposition~3.4.9;
Corollary~3.4.10). 

The method extends without difficulty to $m\times n$ matrices, with the
termination condition $\MNM(A) = \min\{m,n\}$. Directions for further work
include adapting it to coloured grid graphs and to diagrams that encode knots, and relating the extracted genus more explicitly to the
Seifert genus $g(K)$ over all grid diagrams of a knot
(see e.g.\ Cromwell~\cite{cromwell1995} and Jin, Stoimenow \& Lee~\cite{jin2021}).  The minimal grid number $c(K)$ and the range of associated
genera $g(G)$ over grid diagrams of $K$ have been studied extensively, e.g.\ by
Cromwell~\cite{cromwell1995} and Jin, Stoimenow \& Lee~\cite{jin2021}.

The paper is organised as follows.
Section~\ref{sec:alg} develops the algebraic algorithm (minimum marking
number, the iterative procedure, and why a maximum matching survives each
correction). Section~\ref{sec:analysis} establishes termination via a
potential-function argument and a strongly polynomial $\mathcal{O}(n^{3})$ time
bound. Section~\ref{sec:grid} applies the algorithm to grid diagrams, recalls
the grid-homology formula that yields $g(G)$, and illustrates it on the trefoil
and further examples. Section~\ref{sec:gridmoves} applied the algorithm to find the relation between associated genus of grid diagram and gird move. This is important for  computing the genus of knot.  

\section{Algorithm}\label{sec:alg}

Let $A$ be an $n\times n$ matrix.  Write $Z(A)=\{(i,j):a_{ij}=0\}$ for its
set of zero positions.

\begin{definition}\label{def:MNM}
	A pair $(R,C)$ with $R,C\subseteq\{1,\dots,n\}$ \emph{marks all zeros} of
	$A$ if for every $(i,j)\in Z(A)$, either $i\in R$ or $j\in C$ (or both).
	The \emph{minimum marking number} of $A$ is defined by 
	\[
	\MNM(A):=\min\{|R|+|C|\mid (R,C)\text{ marks all zeros of }A\}.
	\]
\end{definition}

By K\H{o}nig's theorem, $\MNM(A)$ equals the size of a maximum matching in
the bipartite graph with bipartition $\{1,\dots,n\}\sqcup\{1,\dots,n\}$ and
edge set $Z(A)$. One can therefore compute $\MNM(A)$ in $\mathcal{O}(n^{5/2})$
time via the Hopcroft--Karp algorithm~\cite{hopcroft1973}, or by any other
maximum matching algorithm.

We now describe the algorithm.

The algorithm has two phases.  Phase~1 ensures that every row and every column of  the matrix contains at least one zero.
Phase~2 iteratively adjusts the $n\times n$ matrix until $\MNM=n$.

\vspace{1cm}
\begin{breakablealgorithm}
	\caption{Transforming a non-negative matrix to one with $\MNM(H)=n$}
	\label{alg:main}
	\begin{algorithmic}[1]
		\Require An $n\times n$ non-negative integer matrix $W^{0}$
		\Ensure A non-negative matrix $H$ with $\MNM(H)=n$
		
		\Statex
		\State \textbf{Phase~1: Initial reduction.}
		\For{$i=1$ \textbf{to} $n$}
		\State $u_i\gets \min_{1\le j\le n} w^{0}_{ij}$
		\For{$j=1$ \textbf{to} $n$}
		\State $w^{1}_{ij}\gets w^{0}_{ij} - u_i$
		\EndFor
		\EndFor
		\For{$j=1$ \textbf{to} $n$}
		\State $v_j\gets \min_{1\le i\le n} w^{1}_{ij}$
		\For{$i=1$ \textbf{to} $n$}
		\State $w^{2}_{ij}\gets w^{1}_{ij} - v_j$
		\EndFor
		\EndFor
		
		\Statex
		\State \textbf{Phase~2: Incremental Hungarian (K\H{o}nig) iteration.}
		\State Initialise dual potentials $u_i,v_j$ from Phase~1 (the current reduced
		\Statex \quad matrix is $w^{2}_{ij}-u_i-v_j$, non-negative with a zero in each row/column).
		\State Compute an initial maximum matching $M$ in the zero graph
		\Statex \quad $E=\{(i,j):w^{2}_{ij}-u_i-v_j=0\}$.
		\State $t\gets 2$
		\Loop
		\If{$\MNM(W^2)=n$}
		\State $H\gets (w^{2}_{ij}-u_i-v_j)$ \textbf{and terminate}
		\EndIf
		\State Grow an alternating tree $T$ from the unmatched rows of $M$ in $E$
		\If{an augmenting path is found}
		\State Augment $M$ along it; restart $T$ from the new unmatched rows
		\Comment{$\MNM$ increases by $1$}
		\Else
		\State $T$ defines a minimum vertex cover $(R_t,C_t)$;
		\Statex \quad set $I'_t\gets\{1,\dots,n\}\setminus R_t$, $J'_t\gets\{1,\dots,n\}\setminus C_t$
		\State $\varepsilon\gets\min\{w^{2}_{ij}-u_i-v_j\mid i\in I'_t,\;j\in J'_t\,\}$
		\Comment{$\varepsilon>0$, Lemma~\ref{lem:delta}}
		\State Update potentials: $u_i\gets u_i+\varepsilon$ for $i\in I'_t$;
		\Statex \quad $v_j\gets v_j-\varepsilon$ for $j\in C_t$
		\State The newly created zeros enter $E$ and are added to $T$; continue growing
		\Comment{no restart}
		\State $t\gets t+1$
		\EndIf
		\EndLoop
	\end{algorithmic}
\end{breakablealgorithm}

\begin{figure}[H]
	\centering
	\tikzset{every picture/.style={line width=0.75pt}}%
	\begin{tikzpicture}[x=0.75pt,y=0.75pt,yscale=-1,xscale=1]
		\draw    (80.44,80.14) -- (80.94,97.8) ;
		\draw [shift={(81,99.8)}, rotate = 268.36] [color={rgb, 255:red, 0; green, 0; blue, 0 }  ][line width=0.75]    (10.93,-3.29) .. controls (6.95,-1.4) and (3.31,-0.3) .. (0,0) .. controls (3.31,0.3) and (6.95,1.4) .. (10.93,3.29)   ;
		\draw   (80.52,198.49) -- (112.02,218.3) -- (80.52,238.1) -- (49.02,218.3) -- cycle ;
		\draw   (56.1,57.24) .. controls (56.1,54.53) and (58.3,52.33) .. (61.01,52.33) -- (100.33,52.33) .. controls (103.04,52.33) and (105.23,54.53) .. (105.23,75.47) .. controls (105.23,78.18) and (103.04,80.37) .. (100.33,80.37) -- (61.01,80.37) .. controls (58.3,80.37) and (56.1,78.18) .. (56.1,75.47) -- cycle ;
		\draw   (40.2,99.34) -- (120.2,99.34) -- (120.2,129.25) -- (40.2,129.25) -- cycle ;
		\draw   (17.4,148.72) -- (143.8,148.72) -- (143.8,179.1) -- (17.4,179.1) -- cycle ;
		\draw    (80.52,238.1) -- (80.87,257.25) ;
		\draw [shift={(80.9,259.25)}, rotate = 268.96] [color={rgb, 255:red, 0; green, 0; blue, 0 }  ][line width=0.75]    (10.93,-3.29) .. controls (6.95,-1.4) and (3.31,-0.3) .. (0,0) .. controls (3.31,0.3) and (6.95,1.4) .. (10.93,3.29)   ;
		\draw   (62.05,259.72) -- (128.86,259.72) -- (100.23,287.06) -- (33.41,287.06) -- cycle ;
		\draw   (56.97,311.07) .. controls (56.97,308.36) and (59.17,306.16) .. (61.88,306.16) -- (101.19,306.16) .. controls (103.91,306.16) and (106.1,308.36) .. (106.1,311.07) -- (106.1,329.29) .. controls (106.1,332) and (103.91,334.2) .. (101.19,334.2) -- (61.88,334.2) .. controls (59.17,334.2) and (56.97,332) .. (56.97,329.29) -- cycle ;
		\draw    (112.02,218.3) -- (144.03,218.46) ;
		\draw [shift={(146.03,218.47)}, rotate = 180.29] [color={rgb, 255:red, 0; green, 0; blue, 0 }  ][line width=0.75]    (10.93,-3.29) .. controls (6.95,-1.4) and (3.31,-0.3) .. (0,0) .. controls (3.31,0.3) and (6.95,1.4) .. (10.93,3.29)   ;
		\draw   (146.27,207.95) -- (216.2,207.95) -- (216.2,229.22) -- (146.27,229.22) -- cycle ;
		\draw    (216.2,218.2) -- (243.6,218.67) ;
		\draw [shift={(245.6,218.7)}, rotate = 180.98] [color={rgb, 255:red, 0; green, 0; blue, 0 }  ][line width=0.75]    (10.93,-3.29) .. controls (6.95,-1.4) and (3.31,-0.3) .. (0,0) .. controls (3.31,0.3) and (6.95,1.4) .. (10.93,3.29)   ;
		\draw   (245.44,197.2) -- (545,197.2) -- (545,240.43) -- (245.44,240.43) -- cycle ;
		\draw    (373.4,197.8) -- (373.02,166.18) -- (353,166.18) -- (145.8,165.8) ;
		\draw [shift={(143.8,165.8)}, rotate = 0.1] [color={rgb, 255:red, 0; green, 0; blue, 0 }  ][line width=0.75]    (10.93,-3.29) .. controls (6.95,-1.4) and (3.31,-0.3) .. (0,0) .. controls (3.31,0.3) and (6.95,1.4) .. (10.93,3.29)   ;
		\draw    (79.96,178.84) -- (80.46,196.5) ;
		\draw [shift={(80.52,198.49)}, rotate = 268.36] [color={rgb, 255:red, 0; green, 0; blue, 0 }  ][line width=0.75]    (10.93,-3.29) .. controls (6.95,-1.4) and (3.31,-0.3) .. (0,0) .. controls (3.31,0.3) and (6.95,1.4) .. (10.93,3.29)   ;
		\draw    (80.84,286.94) -- (81.34,304.6) ;
		\draw [shift={(81.4,306.6)}, rotate = 268.36] [color={rgb, 255:red, 0; green, 0; blue, 0 }  ][line width=0.75]    (10.93,-3.29) .. controls (6.95,-1.4) and (3.31,-0.3) .. (0,0) .. controls (3.31,0.3) and (6.95,1.4) .. (10.93,3.29)   ;
		\draw    (81.24,130.14) -- (81.74,147.8) ;
		\draw [shift={(81.8,149.8)}, rotate = 268.36] [color={rgb, 255:red, 0; green, 0; blue, 0 }  ][line width=0.75]    (10.93,-3.29) .. controls (6.95,-1.4) and (3.31,-0.3) .. (0,0) .. controls (3.31,0.3) and (6.95,1.4) .. (10.93,3.29)   ;
		
		\draw (64.44,57.04) node [anchor=north west][inner sep=0.75pt]   [align=left] {Start};
		\draw (43.64,104.26) node [anchor=north west][inner sep=0.75pt]    {$W^{t} ,\ t=2$};
		\draw (19.35,153.3) node [anchor=north west][inner sep=0.75pt]    {$\MNM\left( W^{t}\right) =m$};
		\draw (56.87,209.95) node [anchor=north west][inner sep=0.75pt]    {$m< n$};
		\draw (54.81,237.77) node [anchor=north west][inner sep=0.75pt]    {$No$};
		\draw (48.86,265.37) node [anchor=north west][inner sep=0.75pt]    {$H=W^{t}$};
		\draw (65.3,310.86) node [anchor=north west][inner sep=0.75pt]   [align=left] {End};
		\draw (149.47,213.75) node [anchor=north west][inner sep=0.75pt]    {$t=t+1$};
		\draw (250.52,204.33) node [anchor=north west][inner sep=0.75pt]    {$W^{t} =W^{t-1} -{\displaystyle \sum _{i\ \in \ I'_t}} \varepsilon \times R_{i} +{\displaystyle \sum _{j\ \in \ C_t}} \varepsilon \times C_{j}$};
		\draw (108.91,202.15) node [anchor=north west][inner sep=0.75pt]    {$Yes$};
	\end{tikzpicture}
	\caption{Flowchart of the iterative correction phase (Phase~2). The loop repeats the minimum-vertex-cover correction until the minimum marking number reaches $n$ (equivalently, the zero positions admit a perfect matching); $R_i$ ($C_j$,respectively)denotes the matrix with all elements equal to 0,except the $i$-th row ($j$-th column,respectively), where all elements are 1; the the returned non-negative matrix is $H=W^{t}$.}
	\label{fig:flowchart}
\end{figure}

For each row of  the Phase~1 (lines 2--13),  we subtract its minimum entry, then for each column we subtract its minimum entry. The resulting matrix $W^{2}$ is non-negative and each row and each column contains at least one zero.

At the start of Phase~2 (lines 15--32) we maintain, together with the current reduced matrix,
a maximal matching $M$ in its zero graph and the dual potentials $u_i,v_j$
introduced in Phase~1 (so that the current entry is $w^{2}_{ij}-u_i-v_j$).  If
$\MNM(W^2)=n$ the matching is perfect and we terminate.  Otherwise we grow an
alternating tree from an unmatched row.  If an augmenting path is found we
augment $M$ (raising $\MNM$ by one) and continue; if no augmenting path
exists, the tree yields a minimum vertex cover $(R_t,C_t)$ by K\H{o}nig's
theorem.  This is the standard alternating-path construction (see
e.g.~\cite[Ch.~7]{papadimitriou1982}) and has the following well-known
property:

\begin{lemma}\label{lem:delta}
	Let $(R_t,C_t)$ be the minimum vertex cover obtained from a maximum
	matching $M$ via alternating paths.  Let $I'_t$ and $J'_t$ be the rows and
	columns \emph{not} in the cover.  Then there is no zero of the current reduced
	matrix in $I'_t\times J'_t$.  Hence
	\[
	\varepsilon = \min\{w^{2}_{ij}-u_i-v_j\mid i\in I'_t,\;j\in J'_t\,\}\geq 1.
	\]
\end{lemma}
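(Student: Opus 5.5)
The plan is to make precise the standard alternating-tree construction of the vertex cover and then check the claim one zero at a time. Let $M$ be a maximum matching in the zero graph $E$ of the current reduced matrix $\tilde w_{ij}=w^{2}_{ij}-u_i-v_j$. Let $U$ be the set of rows left unmatched by $M$. Let $Z$ be the set of rows and columns reachable from $U$ by $M$-alternating paths: these start at a row in $U$, use an edge of $E\setminus M$ to go from a row to a column, and use an edge of $M$ to go from a column to a row. Following K\H{o}nig's construction, we take the cover to be
\[
R_t=\{\text{rows not in } Z\},\qquad C_t=\{\text{columns in } Z\}.
\]
Hence $I'_t$ is exactly the set of rows in $Z$, and $J'_t$ is exactly the set of columns not in $Z$.

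The key step is to show that no zero lies in $I'_t\times J'_t$. Suppose, for contradiction, that $\tilde w_{ij}=0$ with row $i\in Z$ and column $j\notin Z$. We split into two cases according to how row $i$ was reached.
\begin{itemize}
\item If $i\in U$, then $(i,j)$ is an edge of $E$ not in $M$, since $i$ is unmatched. Following it from $i$ reaches $j$, so $j\in Z$, a contradiction.
\item If $i\notin U$, then $i$ was reached through its matched edge, from its partner column $j'=M(i)\in Z$. Since $j\notin Z$, we have $j\neq j'$, so $(i,j)\in E\setminus M$. Extending the alternating path from $i$ along $(i,j)$ again puts $j$ in $Z$, a contradiction.
\end{itemize}
So every zero of the current matrix lies in $R_t\times\{1,\dots,n\}$ or in $\{1,\dots,n\}\times C_t$. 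In particular $(R_t,C_t)$ marks all zeros.

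Minimality of the cover, $|R_t|+|C_t|=|M|$, follows from the usual argument, but the lemma does not need it. Two facts are needed, and I would check both.
\begin{itemize}
\item \emph{Absence of augmenting paths.} This is why each column in $Z$ is matched, and why the tree is well defined and the algorithm enters this branch at all.
\item \emph{Non-emptiness of $I'_t\times J'_t$.} Since $|M|<n$, we have $U\neq\emptyset$, so $I'_t\neq\emptyset$. Also $|C_t|\le|M|<n$ because every column in $Z$ is matched, so $J'_t\neq\emptyset$. Hence the minimum defining $\varepsilon$ is over a non-empty set.
\end{itemize}

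For the bound $\varepsilon\ge1$, we use that all entries of the current reduced matrix are non-negative integers. Integrality holds because $W^0$ is an integer matrix and every $u_i,v_j$ update is by integers: the initial minima, and then previous values of $\varepsilon$, by induction on $t$. Non-negativity is the invariant of the algorithm, which I would verify inductively. After an update, entries in $I'_t\times J'_t$ decrease by $\varepsilon$ and so stay $\ge0$ by the choice of $\varepsilon$. Entries in $R_t\times C_t$ increase by $\varepsilon$. All other entries are unchanged. An integer entry that is non-negative and nonzero is at least $1$, so $\varepsilon\ge1$.

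The main obstacle is the bookkeeping, not the combinatorics. The algorithm's potential conventions ($u_i$ incremented on $I'_t$, $v_j$ decremented on $C_t$) must be reconciled with the reduced-matrix formula $w^{2}_{ij}-u_i-v_j$, so that the invariants "non-negative integer" and "matching edges stay zero" are genuinely preserved. I would settle this first, writing the update explicitly as in Figure~\ref{fig:flowchart}, before running the case argument above.
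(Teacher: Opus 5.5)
Your proof is correct and is the standard K\H{o}nig alternating-path argument that the paper invokes by citation rather than writing out: it shows the reachable set $Z$ is closed under non-matching zero edges out of its rows, so no zero can lie in $I'_t\times J'_t$. You also supply what the paper leaves implicit, namely $I'_t\times J'_t\neq\varnothing$ via $|C_t|\le|M|<n$ and the integrality/non-negativity invariant needed to pass from ``no zero'' to $\varepsilon\ge 1$; and you are right that the paper's prose sign convention for $u_i,v_j$ conflicts with its algorithm listing, though the entrywise update your induction uses is the one in Table~\ref{tab:update}.
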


We then apply the dual update
\begin{itemize}
	\item decrease $u_i$ by $\varepsilon$ for every unmarked row $i\in I'_t$;
	\item increase $v_j$ by $\varepsilon$ for every marked column $j\in C_t$.
\end{itemize}
Equivalently, in the explicit matrix form this subtracts $\varepsilon$ from the
unmarked rows and adds $\varepsilon$ to the marked columns.  The operation is
the dual-update step of the Hungarian algorithm
(Kuhn~\cite{kuhn1955}; see also Burkard, Dell'Amico \& Martello~\cite{burkard2009}).
Crucially, the alternating tree, the matching $M$ and a slack array over the
tree frontier are \emph{maintained across corrections}: a correction only
exposes new zero edges on the frontier and the search continues without
restarting.  It is this incremental maintenance---not a from-scratch
recomputation of the matching at each step---that delivers the strongly
polynomial $\mathcal{O}(n^{3})$ bound of Section~\ref{sec:analysis}.

\begin{table}[H]
	\centering
	\caption{Effect of the correction step on matrix entries}
	\label{tab:update}
	\begin{tabular}{lccc}
		\toprule
		Row status & Column status & Update & Zeros preserved? \\
		\midrule
		$i\in I'_t$ (unmarked) & $j\in C_t$ (marked) & $-\varepsilon+\varepsilon=0$ & unchanged \\
		$i\in I'_t$ (unmarked) & $j\in J'_t$ (unmarked) & $-\varepsilon$ & new zeros appear \\
		$i\in R_t$ (marked)   & $j\in C_t$ (marked)   & $+\varepsilon$ & zeros disappear \\
		$i\in R_t$ (marked)   & $j\in J'_t$ (unmarked) & $0$     & unchanged \\
		\bottomrule
	\end{tabular}
\end{table}

After the update we increment $t$ and repeat.

\section{Termination and Complexity}\label{sec:analysis}

The termination proof requires that $\MNM$ never decreases along the
iteration.  The naive claim that the current maximum matching survives each
correction step is false in general; we therefore establish monotonicity from
the structure of \emph{minimal} markings instead.  Write $Z(A)$ for the set of
zero positions of a matrix $A$.

\subsection{Non-decrease of MNM}
\begin{lemma}\label{lem:hall-cover}
	Let $(R_A,C_A)$ be a minimal marking of all zeros of an $n\times n$ matrix
	$A$, where $R_A$ is the set of marked rows and $C_A$ the set of marked
	columns.  For any subset $B\subseteq R_A$, define its neighbour set in the
	unmarked columns by
	\[
	N(B)=\{j\mid j\notin C_A,\ \exists i\in B\text{ with }a_{i,j}\in Z(A)\}.
	\]
	Then $|N(B)|\ge |B|$.
\end{lemma}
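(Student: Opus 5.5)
This is a Hall-type condition that follows from minimality of the marking, via an exchange argument. Suppose for contradiction that some $B\subseteq R_A$ has $|N(B)|<|B|$. Define a new pair
\[
R'=R_A\setminus B,\qquad C'=C_A\cup N(B).
\]
We will show that $(R',C')$ still marks all zeros of $A$. Since $N(B)\cap C_A=\emptyset$ by definition, $|R'|+|C'|=|R_A|+|C_A|-|B|+|N(B)|<|R_A|+|C_A|$. This contradicts the minimality of $(R_A,C_A)$, where minimal is read as minimum, i.e.\ achieving $\MNM(A)$.

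To check the covering, take any zero $(i,j)\in Z(A)$.
\begin{itemize}
\item If $i\notin B$, then $(i,j)$ was covered by $i\in R_A$ or by $j\in C_A$. Since $i\notin B$, both $R_A\setminus B$ and $C'$ retain whichever line did the covering, so $(i,j)$ is still covered.
\item If $i\in B$ and $j\in C_A$, then $j\in C'$ and $(i,j)$ is covered.
\item If $i\in B$ and $j\notin C_A$, then by definition $j\in N(B)\subseteq C'$, so $(i,j)$ is covered.
\end{itemize}
Hence $(R',C')$ marks all zeros, which gives the contradiction.

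The only real obstacle is interpretive. ``Minimal marking'' must mean a marking of minimum total size $|R_A|+|C_A|=\MNM(A)$, not merely one that is inclusion-minimal. For a merely inclusion-minimal cover the statement can fail, since the exchange produces a genuinely different, smaller cover rather than a subset of the old one. I would state this reading explicitly at the start of the proof, consistent with Definition~\ref{def:MNM}.

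As a remark, one could alternatively derive the lemma from K\H{o}nig's theorem: a maximum matching saturates every vertex of a minimum cover, and each $i\in R_A$ is matched into a column outside $C_A$. However, the direct exchange argument above is self-contained, so I would present that.
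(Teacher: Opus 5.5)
Your proposal is correct and is essentially the paper's proof: the same exchange $(R_A\setminus B,\ C_A\cup N(B))$, the same case check that every zero stays covered, and the same count $|R_A|+|C_A|-|B|+|N(B)|<\MNM(A)$ giving the contradiction. Your explicit remark that ``minimal'' must mean a marking of minimum size $\MNM(A)$ is a worthwhile clarification, since the paper leaves this implicit and the statement fails for inclusion-minimal markings.
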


\begin{proof}
	Suppose, for a contradiction, that some $B\subseteq R_A$ satisfies
	$|N(B)|<|B|$.  Build a new marking $(R_1,C_1)$ with
	$R_1=R_A\setminus B$ and $C_1=C_A\sqcup N(B)$.  We check that $(R_1,C_1)$
	marks every zero $z=a_{i,j}\in Z(A)$:
	\begin{itemize}
		\item if $i\in R_A\setminus B=R_1$, then $z$ is marked by the row $i\in R_1$;
		\item if $i\in B\subseteq R_A$: when $j\in C_A$ we have $j\in C_1$,
		and when $j\notin C_A$ the definition of $N(B)$ gives $j\in N(B)\subseteq C_1$;
		\item if $i\notin R_A$, minimality of the original marking forces $j\in C_A\subseteq C_1$.
	\end{itemize}
	Since $N(B)\cap C_A=\varnothing$, we have $|C_1|=|C_A|+|N(B)|$.  Let
	$m=|R_A|+|C_A|$ be $\MNM(A)$.  Then
	\[
	|R_1|+|C_1| = |R_A|-|B|+|C_A|+|N(B)| = m+\bigl(|N(B)|-|B|\bigr) < m,
	\]
	contradicting the minimality of $(R_A,C_A)$.  Hence $|N(B)|\ge|B|$.
\end{proof}

The symmetric statement also holds: for any $E\subseteq C_A$, writing
$N(E)=\{i\mid i\notin R_A,\ \exists j\in E\text{ with } a_{i,j}\in Z(A)\}$,
one has $|N(E)|\ge|E|$.

\begin{definition}\label{def:cross-zero}
	A zero of $A$ that is marked by \emph{both} a row in $R_A$ and a column in
	$C_A$ is called a \emph{cross zero}.
\end{definition}

\begin{proposition}\label{prop:invariant}
	Let $Z$ be the set of all zeros of $A$, and let $Z'$ be obtained from $Z$
	by deleting all cross zeros.  Then the minimum marking number of $Z'$ equals
	the minimum marking number of $Z$.
\end{proposition}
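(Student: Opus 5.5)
We prove the two inequalities separately, working throughout with a fixed minimal marking $(R_A,C_A)$ of $Z=Z(A)$, so $\MNM(Z)=|R_A|+|C_A|$. The cross zeros are exactly the zeros in $R_A\times C_A$, so $Z'=Z\setminus(R_A\times C_A)$. Since $Z'\subseteq Z$, every marking of $Z$ also marks $Z'$; in particular $(R_A,C_A)$ marks $Z'$, and hence $\MNM(Z')\le\MNM(Z)$. The substance of the proof is the reverse inequality.

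For $\MNM(Z')\ge\MNM(Z)$ we use K\H{o}nig's theorem, recalled after Definition~\ref{def:MNM}: the minimum marking number of a zero set equals the size of a maximum matching in its bipartite zero graph. It therefore suffices to exhibit a matching of size $|R_A|+|C_A|$ that uses only zeros of $Z'$, meaning zeros that are not cross zeros. We build it from two disjoint pieces.

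The first piece matches the marked rows $R_A$ into the unmarked columns $\{1,\dots,n\}\setminus C_A$. Consider the bipartite graph between $R_A$ and the unmarked columns whose edges are the zeros $(i,j)$ with $i\in R_A$ and $j\notin C_A$. Lemma~\ref{lem:hall-cover} says precisely that every $B\subseteq R_A$ satisfies $|N(B)|\ge|B|$. By Hall's theorem there is a matching $M_1$ saturating $R_A$, and all its edges lie in $R_A\times(\text{unmarked columns})$, so none of them is a cross zero.

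The second piece matches the marked columns $C_A$ into the unmarked rows. The symmetric statement following Lemma~\ref{lem:hall-cover} gives Hall's condition for $C_A$ in the graph with edges in $(\text{unmarked rows})\times C_A$. Hall's theorem then yields a matching $M_2$ saturating $C_A$, again avoiding cross zeros.

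It remains to check that $M_1\cup M_2$ is a matching. The edges of $M_1$ use rows in $R_A$ and columns outside $C_A$. The edges of $M_2$ use rows outside $R_A$ and columns in $C_A$. So the two matchings share no row and no column, and $M_1\cup M_2$ is a matching in $Z'$ of size $|R_A|+|C_A|=\MNM(Z)$. This gives $\MNM(Z')\ge\MNM(Z)$, and combining with the first paragraph proves the proposition.

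The main obstacle is justifying the use of Lemma~\ref{lem:hall-cover} and its symmetric version. Both rest on the minimality of $(R_A,C_A)$, so we must make sure the fixed marking is a \emph{minimum} one and not merely inclusion-minimal. The lemma's proof compares with $m=\MNM(A)$, so this is the hypothesis we take. The rest is a direct application of Hall's and K\H{o}nig's theorems.
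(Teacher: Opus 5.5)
Your proof is correct, but it takes a different route from the paper. The paper never builds a matching. It fixes an arbitrary marking $(R',C')$ of $Z'$ and applies Lemma~\ref{lem:hall-cover} directly to the dropped rows $B=R_A\setminus R'$ and dropped columns $E=C_A\setminus C'$. It notes that $N(B)\subseteq C'\setminus C_A$ and $N(E)\subseteq R'\setminus R_A$: the zeros joining $B$ to unmarked columns, or $E$ to unmarked rows, are not cross zeros, so they lie in $Z'$ and must be marked by $(R',C')$. Adding the two resulting inequalities gives $|R'|+|C'|\ge|R_A|+|C_A|$. That argument is self-contained, using only Lemma~\ref{lem:hall-cover} and counting, with neither Hall's nor K\H{o}nig's theorem. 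You instead feed the same lemma into Hall's theorem to get an explicit certificate: a matching $M_1\cup M_2$ of size $\MNM(Z)$ inside $Z'$. You then need only the trivial half of K\H{o}nig's theorem, namely that a marking must spend a distinct line on each edge of a matching. Your construction is the standard Hall-based proof of K\H{o}nig's theorem, plus the observation that the resulting maximum matching sits in the two blocks ``marked row, unmarked column'' and ``unmarked row, marked column''. That buys more than the proposition. By Table~\ref{tab:update}, these are exactly the blocks the correction step leaves unchanged. So, taking $(R_A,C_A)=(R_t,C_t)$, the matching $M_1\cup M_2$ survives the update, and Proposition~\ref{prop:monotone} follows immediately. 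If you allow K\H{o}nig's theorem in full, you can skip Hall altogether. A maximum matching of $Z$ has $|R_A|+|C_A|$ edges, each covered by at least one line of $(R_A,C_A)$, while each line covers at most one of them. Hence no matching edge is a cross zero.
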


\begin{proof}
	Write $m=\MNM(Z)$ and $m_0=\MNM(Z')$.  Since $Z'\subseteq Z$, we have
	$m_0\le m$.  For the reverse inequality, take \emph{any} marking
	$(R',C')$ of $Z'$, and set $B=R_A\setminus R'$ and $E=C_A\setminus C'$.
	For a row $i\in B$ we have $i\notin R'$, so every zero
	$z=a_{i,j}\in Z'$ with $j\notin C_A$ must be column-marked, i.e.
	$j\in C'$, hence $j\in N(B)$; thus $N(B)\subseteq C'$.  Because the
	columns in $N(B)$ avoid $C_A$, they are disjoint from $C_A\cap C'$, so
	$|C'|\ge |C_A\cap C'|+|N(B)|$.  Symmetrically,
	$|R'|\ge |R_A\cap R'|+|N(E)|$.  Adding and using
	Lemma~\ref{lem:hall-cover} (and its symmetric form),
	\[
	|R'|+|C'|
	\ge |R_A\cap R'|+|E|+|C_A\cap C'|+|B|
	= |R_A|+|C_A| = m.
	\]
	Since $(R',C')$ was arbitrary, $m_0\ge m$, and therefore $m_0=m$.
\end{proof}

\begin{proposition}\label{prop:monotone}
	In each iteration of the algorithm the minimum marking number $\MNM$ stays
	the same or increases.
\end{proposition}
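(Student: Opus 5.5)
Two kinds of iteration can occur in Phase~2. An augmentation step only enlarges the matching $M$ in the zero graph, so by K\H{o}nig's theorem $\MNM$ rises by exactly one and there is nothing more to check. The real content is the correction step, which replaces the current matrix $A$ by $A'$. Here $A'$ is obtained from $A$ by subtracting $\varepsilon$ on the unmarked rows $I'_t$ and adding $\varepsilon$ on the marked columns $C_t$, where $(R_t,C_t)$ is a minimum marking of $Z(A)$. The plan is to compare $Z(A')$ with $Z(A)$ entry by entry and then invoke Proposition~\ref{prop:invariant}.

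\emph{Step 1 (bookkeeping of zeros).} Table~\ref{tab:update} gives the effect of the update on each block, and Lemma~\ref{lem:delta} (with $\varepsilon\ge 1$ chosen as the minimum over $I'_t\times J'_t$) keeps $A'$ non-negative.
\begin{itemize}
\item Entries in $I'_t\times C_t$ and in $R_t\times J'_t$ are unchanged.
\item Entries in $I'_t\times J'_t$ decrease by $\varepsilon$. They were all positive by Lemma~\ref{lem:delta}, so they stay non-negative, and some become zero.
\item Entries in $R_t\times C_t$ increase by $\varepsilon$. Zeros there are lost, and these are exactly the cross zeros of $A$ with respect to $(R_t,C_t)$ (Definition~\ref{def:cross-zero}).
\end{itemize}
Consequently $Z(A')\supseteq Z(A)\setminus\{\text{cross zeros}\}=:Z'$.

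\emph{Step 2 (monotonicity).} The minimum marking number is monotone under inclusion of zero sets: any marking of a larger set marks every subset, so a larger zero set has a no-smaller minimum. Hence $\MNM(A')\ge \MNM(Z')$. By Proposition~\ref{prop:invariant} applied to the minimal marking $(R_t,C_t)$, we get $\MNM(Z')=\MNM(Z(A))=\MNM(A)$. Combining the two gives $\MNM(A')\ge\MNM(A)$.

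\emph{Main obstacle.} The delicate point is ensuring that the cover $(R_t,C_t)$ produced by the alternating tree is genuinely a \emph{minimal} marking of $Z(A)$, since both Lemma~\ref{lem:hall-cover} and Proposition~\ref{prop:invariant} require minimality. I would justify this with K\H{o}nig's theorem: when no augmenting path exists, the standard construction takes the rows not reached by the tree together with the columns reached by it. This cover has size $|M|$, which equals the maximum matching size, so it attains $\MNM(A)$.

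A second, minor point is the sign convention for the potentials. The text and the pseudocode state the update with opposite signs on $u_i$ and $v_j$. I would fix the matrix form of Figure~\ref{fig:flowchart} as the definition, so that the block analysis of Step~1 is unambiguous.
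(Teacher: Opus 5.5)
Your argument is correct and is essentially the paper's proof. The correction only destroys the cross zeros in $R_t\times C_t$, so $Z(A')$ contains the non-cross zeros of $A$, and Proposition~\ref{prop:invariant} together with monotonicity of $\MNM$ under inclusion gives $\MNM(A')\ge\MNM(A)$; your K\H{o}nig check that the alternating-tree cover is a minimum marking makes explicit a hypothesis the paper uses without comment. One small correction: an augmentation does not change the matrix, so it raises $|M|$ toward $\MNM$ rather than raising $\MNM$ ``by exactly one'', and non-decrease in that case is trivial because $\MNM$ is unchanged.
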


\begin{proof}
	Let $m$ be $\MNM$ before the correction, $m_0$ the $\MNM$ of the zeros
	remaining after deleting the cross zeros before the correction, and $m'$ the
	$\MNM$ after the correction.  If there are no cross zeros, the correction
	only creates new zeros and never removes old ones, so $m'\ge m$.  If cross
	zeros are present, Proposition~\ref{prop:invariant} gives $m_0=m$; the
	post-correction  $m'$ is the minimum marking number of the old non-cross zeros
	together with the newly created zeros, hence $m'\ge m_0=m$.  In either case
	$m'\ge m$, so $\MNM$ never decreases.
\end{proof}
\subsection{Termination}
For each matrix $W^t$, let $r_i^{(t)}$ be the total amount subtracted from row~$i$ of $W^2$ and $c_j^{(t)}$ the total amount added to column~$j$ of $W^2$. Then the matrix $W^{t}=(w_{i,j}^t)_{n\times n}$ can be expressed as
\begin{equation}\label{eq:Wt}
	w^{t}_{ij} = w^{2}_{ij} - r_i^{(t)} + c_j^{(t)},\qquad t\ge2.
\end{equation}

 Denote
	\[
	S_t := \sum_{i=1}^{n} r_i^{(t)} - \sum_{j=1}^{n} c_j^{(t)},
	\]
	with $S_2=0$.

\begin{lemma}\label{lem:potential-increase}
	If $\MNM(W^{t})=m<n$, then
	\[
	S_{t+1} = S_t + \varepsilon\,(n-m) \ge S_t + 1.
	\]
	Hence $S_t$ strictly increases when the algorithm has not yet terminated.
\end{lemma}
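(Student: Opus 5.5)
The identity follows directly from the bookkeeping in \eqref{eq:Wt} together with the description of the correction step. Passing from $W^t$ to $W^{t+1}$, we subtract $\varepsilon$ from every unmarked row $i\in I'_t$ and add $\varepsilon$ to every marked column $j\in C_t$. In terms of the cumulative quantities this means
\[
r_i^{(t+1)}=r_i^{(t)}+\varepsilon\,[i\in I'_t],\qquad c_j^{(t+1)}=c_j^{(t)}+\varepsilon\,[j\in C_t].
\]
Summing over all rows and all columns then gives
\[
S_{t+1}=S_t+\varepsilon\bigl(|I'_t|-|C_t|\bigr)=S_t+\varepsilon\bigl(n-|R_t|-|C_t|\bigr).
\]

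Next we identify $|R_t|+|C_t|$ with $m$. The pair $(R_t,C_t)$ is the minimum vertex cover produced from the maximum matching by the alternating-tree construction. By K\H{o}nig's theorem, as recalled after Definition~\ref{def:MNM}, its size equals the maximum matching size, which in turn equals $\MNM(W^t)=m$. Substituting this into the previous display yields $S_{t+1}=S_t+\varepsilon(n-m)$.

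For the inequality, the hypothesis $m<n$ gives $n-m\ge 1$. Lemma~\ref{lem:delta} gives $\varepsilon\ge1$, and this uses integrality of the entries, which is preserved because every update shifts entries by the integer $\varepsilon$. Together these give $\varepsilon(n-m)\ge1$, so $S_{t+1}\ge S_t+1$, and strict increase follows at once.

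The main subtlety is making sure the cover used in the correction step really is minimum, and not merely some marking. If one only had an arbitrary marking of all zeros, $|R_t|+|C_t|$ could exceed $m$ and even reach $n$, and the increment would no longer be positive. We therefore rely on the step of Algorithm~\ref{alg:main} in which the alternating tree fails to find an augmenting path. At that point the matching is maximum and the tree-derived cover is a minimum one, the standard K\H{o}nig construction. A second point to state explicitly is the sign convention: the paper's prose describes the update once in terms of potentials and once as ``subtract $\varepsilon$ from unmarked rows, add to marked columns''. We use the matrix form from Figure~\ref{fig:flowchart}, which matches the definitions of $r_i^{(t)}$ and $c_j^{(t)}$.
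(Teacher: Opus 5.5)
Your proposal is correct and follows the paper's proof essentially step for step. You compute $S_{t+1}-S_t=\varepsilon\bigl(|I'_t|-|C_t|\bigr)$ from the row/column update, rewrite it as $\varepsilon(n-m)$ using $|I'_t|=n-|R_t|$ and $|R_t|+|C_t|=m$, and conclude with $m<n$ and $\varepsilon\ge 1$ from Lemma~\ref{lem:delta}. Your extra remarks (K\H{o}nig's theorem to justify $|R_t|+|C_t|=m$, preservation of integrality, and fixing the sign convention via the matrix form of the update) make explicit what the paper's proof takes for granted.
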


\begin{proof}
	The correction step subtracts $\varepsilon$ from the $|I'_t|$ unmarked rows and
	adds $\varepsilon$ to the $|C_t|$ marked columns. Therefore
	\[
	S_{t+1} - S_t = \varepsilon\,|I'_t| - \varepsilon\,|C_t|
	= \varepsilon\bigl(|I'_t| - |C_t|\bigr).
	\]
	Since $m = |R_t|+|C_t|$ and $|I'_t| = n - |R_t|$, we have
	$|I'_t| - |C_t| = n - m$. The assumption $m<n$ gives $n-m\ge1$, and
	$\varepsilon\geq 1$ by Lemma~\ref{lem:delta}, so the increment is at least~$1$.
\end{proof}

\begin{lemma}[Bounded potential]\label{lem:bounded}
	For every permutation $\sigma$ in the symmetric group $S_n$,
	\[
	S_t \le \sum_{i=1}^{n} w^{2}_{i,\sigma(i)}.
	\]
\end{lemma}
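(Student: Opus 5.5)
We use the non-negativity of every $W^t$ together with the representation \eqref{eq:Wt}. Throughout the algorithm the current matrix stays entrywise non-negative. By Lemma~\ref{lem:delta}, $\varepsilon$ is the minimum entry over $I'_t\times J'_t$, so subtracting it leaves those entries $\ge 0$. Entries in $I'_t\times C_t$ are unchanged, entries in $R_t\times C_t$ increase, and entries in $R_t\times J'_t$ are unchanged. Since $W^2\ge 0$ after Phase~1, induction on $t$ gives $w^t_{ij}\ge 0$ for all $i,j$ and all $t\ge 2$. This is the only place where the correction rule enters, and it is the step to check carefully: the claim depends on the choice of $\varepsilon$ as the minimum over the uncovered block, not over a larger set.

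Now fix an arbitrary permutation $\sigma\in S_n$ and sum \eqref{eq:Wt} along its diagonal:
\[
0\le \sum_{i=1}^n w^t_{i,\sigma(i)} = \sum_{i=1}^n w^2_{i,\sigma(i)} - \sum_{i=1}^n r_i^{(t)} + \sum_{i=1}^n c_{\sigma(i)}^{(t)} .
\]
Because $\sigma$ is a bijection, $\sum_i c^{(t)}_{\sigma(i)}=\sum_j c^{(t)}_j$. The last two terms therefore combine to $-S_t$, and rearranging gives $S_t\le\sum_i w^2_{i,\sigma(i)}$, which is the claim.

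The remaining care is bookkeeping: confirm that \eqref{eq:Wt} really captures the cumulative updates, with rows in $I'_t$ receiving $-\varepsilon$ (added to $r_i$) and columns in $C_t$ receiving $+\varepsilon$ (added to $c_j$). This matches both the flowchart formula and Table~\ref{tab:update}. With the sign conventions aligned, the argument is the standard weak-duality inequality of the assignment problem. Combined with Lemma~\ref{lem:potential-increase}, it bounds the number of corrections by $\min_\sigma\sum_i w^2_{i,\sigma(i)}$, which yields termination.
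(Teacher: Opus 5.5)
Your proof is correct and follows essentially the same route as the paper: combine \eqref{eq:Wt} with the non-negativity of $W^t$, sum along the diagonal of $\sigma$, and use that $\sigma$ is a bijection to turn the column terms into $-S_t$. The one addition is that you prove non-negativity of $W^t$ by induction, since $\varepsilon$ is the minimum over the uncovered block $I'_t\times J'_t$; the paper takes this for granted.
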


\begin{proof}
	From~\eqref{eq:Wt} and the non-negativity of $W^{t}$ we have
	$r_i^{(t)} - c_j^{(t)} \le w^{2}_{ij}$ for all $i,j$. Fix $\sigma$ and sum
	with $j=\sigma(i)$:
	\[
	\sum_{i=1}^{n} \bigl(r_i^{(t)} - c_{\sigma(i)}^{(t)}\bigr)
	\le \sum_{i=1}^{n} w^{2}_{i,\sigma(i)}.
	\]
	Because $\sigma$ is a bijection, $\sum_i c_{\sigma(i)}^{(t)} = \sum_j c_j^{(t)}$,
	and the left-hand side is $S_t$.
\end{proof}

\begin{proposition}\label{prop:increase}
	If $\MNM = m < n$ at some iteration, then $\MNM$ must strictly increase
	after finitely many additional iterations.
\end{proposition}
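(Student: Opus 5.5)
The plan is a potential-function argument that combines Lemma~\ref{lem:potential-increase} with Lemma~\ref{lem:bounded} and Proposition~\ref{prop:monotone}. Suppose, for a contradiction, that at iteration $t_0$ we have $\MNM(W^{t_0})=m<n$, and that $\MNM$ does not strictly increase at any later iteration. By Proposition~\ref{prop:monotone}, $\MNM$ never decreases. It follows that $\MNM(W^{t})=m<n$ for every $t\ge t_0$. In particular the algorithm never terminates, and a correction step is performed at every iteration $t\ge t_0$.

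Next I apply Lemma~\ref{lem:potential-increase} at each such iteration, which gives $S_{t+1}\ge S_t+1$ for all $t\ge t_0$. Here I would note that Lemma~\ref{lem:delta} guarantees $\varepsilon\ge1$, and that $\varepsilon$ is a positive integer because all entries stay integral: the input is integral and $\varepsilon$ is a minimum of entries. Induction then yields $S_t\ge S_{t_0}+(t-t_0)$, so $S_t\to\infty$.

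On the other hand, fix any permutation $\sigma\in S_n$, for instance the identity. Lemma~\ref{lem:bounded} gives the uniform bound $S_t\le\sum_i w^{2}_{i,\sigma(i)}$, a finite constant independent of $t$. This contradicts $S_t\to\infty$. Hence some later iteration has $\MNM>m$. The argument also bounds the delay explicitly: within at most $\min_\sigma\sum_i w^2_{i\sigma(i)}-S_{t_0}+1$ further iterations, $\MNM$ must strictly increase.

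The main subtlety is that Lemma~\ref{lem:potential-increase} needs the cover $(R_t,C_t)$ to be a \emph{minimum} cover with $|R_t|+|C_t|=m$ equal to the current $\MNM$. It also needs each correction step to preserve non-negativity of $W^t$, since this is what Lemma~\ref{lem:bounded} relies on. I would check both directly. The cover is minimum by K\H{o}nig's theorem, as produced by the alternating-tree construction. For non-negativity, use Table~\ref{tab:update}: the only entries that decrease are those in $I'_t\times J'_t$, and they decrease by exactly $\varepsilon$, which is their minimum. So every entry of $W^{t+1}$ remains $\ge0$. With these two facts verified, the contradiction argument above goes through.
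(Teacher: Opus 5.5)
Your proposal is correct and follows the paper's proof essentially verbatim: assume $\MNM$ stays at $m$, use Lemma~\ref{lem:potential-increase} to get $S_t\to\infty$, and contradict the uniform bound of Lemma~\ref{lem:bounded} for a fixed permutation. Your extra checks only make explicit what the paper leaves implicit: using Proposition~\ref{prop:monotone} to turn ``never strictly increases'' into ``stays at $m$'', preservation of non-negativity via Table~\ref{tab:update}, and the explicit delay bound.
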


\begin{proof}
	Assume $\MNM$ stays constant at $m$ forever after that iteration. Then
	by Lemma~\ref{lem:potential-increase}, the potential $S_t$ increases by at
	least $1$ at each step and diverges to $+\infty$. But
	Lemma~\ref{lem:bounded} gives a finite upper bound (take any permutation,
	say the identity), a contradiction. Hence $\MNM$ cannot stay at $m$
	indefinitely.
\end{proof}

\begin{theorem}\label{thm:conv}
	For any $n\times n$ non-negative integer matrix $W^{0}$,
	Algorithm~\ref{alg:main} terminates with a matrix $H$ satisfying
	$\MNM(H)=n$.
\end{theorem}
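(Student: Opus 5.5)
The proof assembles the results of this section: Phase~2 is a sequence of iterations, and we show it reaches $\MNM=n$ after finitely many of them. First we check that Phase~1 is well defined and outputs $W^{2}$, a non-negative integer matrix with a zero in every row and column. Subtracting the row minima leaves each row non-negative with at least one zero. Subtracting the column minima of that matrix keeps every entry non-negative and creates a zero in each column. All entries stay integers throughout. In particular $\MNM(W^{2})\ge 1$, and every later iterate $W^{t}$ is an integer matrix given by \eqref{eq:Wt}.

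Next we check that every iterate stays non-negative, so the hypotheses used in Lemma~\ref{lem:bounded} and Lemma~\ref{lem:delta} hold at every step. Suppose $W^{t}\ge 0$. Table~\ref{tab:update} gives the effect of a correction on each entry. Entries in $I'_t\times J'_t$ drop by $\varepsilon$, and $\varepsilon$ is their minimum, so they stay $\ge 0$. Entries in $R_t\times C_t$ increase, and all other entries are unchanged. So $W^{t+1}\ge 0$, and by induction every $W^t\ge0$. Lemma~\ref{lem:delta} then shows that $\varepsilon$ is a positive integer whenever $\MNM<n$. Here we must note that $I'_t\times J'_t$ is nonempty when $m<n$: we have $|I'_t|+|J'_t|=2n-m>n\ge 1$, and each of $|I'_t|$, $|J'_t|$ is at least $n-m\ge1$. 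Hence the minimum is taken over a nonempty set and the step is well defined.

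The core of the proof is an induction on the value of $\MNM$. By Proposition~\ref{prop:monotone}, $\MNM$ never decreases along the iteration. An augmentation step raises it by exactly one. By Proposition~\ref{prop:increase}, whenever $\MNM=m<n$, it strictly increases within finitely many further iterations. Since $\MNM\le n$ always holds, $\MNM$ can take at most $n$ distinct values, so after finitely many iterations it equals $n$. At that point the termination test fires and the algorithm returns $H=W^{t}$, which is non-negative with $\MNM(H)=n$.

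The main obstacle is making sure the potential argument of Proposition~\ref{prop:increase} matches what the algorithm actually does. Lemma~\ref{lem:potential-increase} uses $m=|R_t|+|C_t|$, which requires $(R_t,C_t)$ to be a \emph{minimum} cover. That is exactly what the alternating-tree construction provides, by K\H{o}nig's theorem, because $M$ is a maximum matching whenever no augmenting path exists. We also need the bound of Lemma~\ref{lem:bounded} to be uniform in $t$; it is, since it depends only on $W^{2}$. Between two consecutive increases of $\MNM$ there are therefore at most $\sum_i w^{2}_{i,i}+1$ correction steps. This gives an explicit, if only pseudo-polynomial, bound on the number of iterations, which suffices for termination. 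The strongly polynomial count is deferred to the complexity analysis.
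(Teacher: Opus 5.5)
Your proof is correct and takes essentially the paper's route: Proposition~\ref{prop:monotone} shows $\MNM$ never decreases, Proposition~\ref{prop:increase} uses the bounded potential $S_t$ to force a strict increase while $\MNM<n$, and $\MNM\le n$ bounds the number of increases. Your added checks are sound details the paper leaves implicit: non-negativity of every $W^t$, non-emptiness of $I'_t\times J'_t$ so that $\varepsilon$ is well defined, and the need for $(R_t,C_t)$ to be a \emph{minimum} cover in Lemma~\ref{lem:potential-increase} (the only slip is cosmetic: an augmentation raises $|M|$, not $\MNM$ of the unchanged matrix, but your argument never uses this).
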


\begin{proof}
	After Phase~1, $2\le\MNM(W^{2})\le n$.  Proposition~\ref{prop:monotone}
	shows that $\MNM$ never decreases. Proposition~\ref{prop:increase} shows
	that whenever $\MNM$ is below $n$, it must eventually rise. Since it can
	only take integer values bounded by $n$, it reaches $n$ after finitely many
	steps.
\end{proof}

\subsection{Time bound}

Let $M_0 = \max_{i,j} w^{2}_{ij}$ be the largest entry after Phase~1.
Lemmas~\ref{lem:potential-increase} and~\ref{lem:bounded} give a crude
pseudo-polynomial bound of $\mathcal{O}(n M_0)$ iterations between successive
increases of $\MNM$.  This is sharpened to a strongly polynomial bound by
implementing Phase~2 as the incremental Hungarian (K\H{o}nig) iteration
described above: the alternating tree, the matching $M$, and a slack array over
the tree frontier are maintained across corrections, so a single correction
that exposes new zeros costs only $\mathcal{O}(n)$ amortised work and adds at
least one new vertex to the tree.  Consequently at most $n$ corrections occur
between two augmentations, and there are at most $n$ augmentations in total.
Each augmenting-path search therefore terminates in $\mathcal{O}(n^{2})$ time,
and the overall worst-case running time is $\mathcal{O}(n^{3})$.  This is the
classical tight bound of the Hungarian method (Kuhn~\cite{kuhn1955}; see also
Burkard, Dell'Amico \& Martello~\cite{burkard2009}) and is independent of the
numerical magnitude $M_0$.

\section{Application 1: computing the genus of grid diagrams}\label{sec:grid}

A \emph{grid diagram} of size~$n$ places $n$ rows and $n$ columns of squares
in the plane, with one ``X'' marker and one ``O'' marker in each row and each
column. Connecting each X to the O in its column by a vertical segment and
each O to the X in its row by a horizontal segment, with crossing information
determined by the diagram, gives a knot diagram. We follow Cromwell~\cite{cromwell2004} in defining the \emph{winding matrix}
$W(G)$ of a grid diagram $G$. This is an $n\times n$ integer matrix whose
first column and last row are identically zero.  The matrix need not be
non-negative. To apply the algorithm we first add column constants:
\begin{equation}\label{eq:colnorm}
	d_j = -\min_{1\le i\le n} w_{ij}^G,\qquad
	w^{0}_{ij} = w_{ij}^G + d_j,\;\forall i.
\end{equation}
The algorithm then produces a matrix $H$ with $\MNM(H)=n$.  By
K\H{o}nig's theorem, $H$ admits a permutation $\sigma$ with
$h_{i,\sigma(i)} = 0$ for all $i$. And this matrix is minimal non-negative matrix by \cite[Lemma 3.4.2]{ozsvath2012}. This permutation encodes a distinguished
grid state.

The fact that $g(G)$ can be read from any minimal non-negative matrix $H$
derived from $W(G)$, and that its value is independent of the choice of $H$, is part of the grid-homology
framework of
Ozsv\'ath--Stipsicz--Szab\'o~\cite{ozsvath2012}. In that work, the Euler characteristic $\chi(F_H)$, and hence
the genus $g(G)$, as a function of the  $\theta(H)$ and the grid
number $n$;  then shows that $\chi(F_H)$, and therefore
$g(G)$, is independent of the particular minimal non-negative matrix chosen.
Our contribution is algorithmic: we show that the row--column reduction of
Algorithm~\ref{alg:main} produces exactly such a matrix $H$ from $W(G)$ and
computes the data needed for the formula  in
$\mathcal{O}(n^{3})$ time.

\begin{theorem}[Genus formula, Ozsv\'ath--Stipsicz--Szab\'o]\cite[Prop.~3.4.9]{ozsvath2012}\label{thm:genus}
	Let $H$ be a minimal non-negative matrix produced by
	Algorithm~\ref{alg:main} and let $n$ be the grid number of $G$.
	The Euler characteristic of the associated surface~$F_H$ is
	\begin{equation}\label{eq:chi}
		\chi(F_H)=n-\theta(H),
	\end{equation}
	where $\theta(H)$ is the invariant of $H$ defined in~\cite[Def.~3.4.8]{ozsvath2012}:
	for each square~$Q$ marked with an $X$ or an $O$ in the grid, let
	$\theta(Q,H)$ be the average of the four matrix entries
	$h_{i,j},\,h_{i,j+1},\,h_{i+1,j},\,h_{i+1,j+1}$ adjoining~$Q$
	(indices taken modulo~$n$ on the torus), and set
	\[
	\theta(H)=\sum_{X\in\mathbb{X}}\theta(X,H)+\sum_{O\in\mathbb{O}}\theta(O,H).
	\]
	Since $F_H$ is a connected orientable surface with one boundary component,
	$\chi(F_H)=1-2g(G)$; therefore the genus of the grid diagram is
	\begin{equation}\label{eq:genus}
		g(G)=\frac{1-\chi(F_H)}{2}=\frac{1}{2}\,\theta(H)-\frac{n-1}{2}.
	\end{equation}
\end{theorem}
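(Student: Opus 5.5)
The statement to be proved is Theorem~\ref{thm:genus}. Its content splits into two parts: the Euler characteristic identity~\eqref{eq:chi}, which is \cite[Prop.~3.4.9]{ozsvath2012}, and the genus formula~\eqref{eq:genus}, which is elementary algebra once~\eqref{eq:chi} holds. So my plan is not to re-derive~\eqref{eq:chi}. Instead I will check that the matrix $H$ returned by Algorithm~\ref{alg:main} satisfies the hypotheses of \cite[Prop.~3.4.9]{ozsvath2012}, and then do the short algebra.

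\textbf{Step 1: $H$ is admissible.} Every operation we apply to $W(G)$ adds a constant to a whole row or a whole column:
\begin{itemize}
\item the column normalization~\eqref{eq:colnorm};
\item the row and column reductions of Phase~1;
\item the dual updates of Phase~2.
\end{itemize}
Hence $H$ differs from $W(G)$ by row and column constants, which is exactly the class of matrices considered in \cite[Sec.~3.4]{ozsvath2012}. Non-negativity of $H$ holds by construction, since each Phase~2 update subtracts $\varepsilon$ only from entries in $I'_t\times J'_t$, and $\varepsilon$ is their minimum. By Theorem~\ref{thm:conv}, $\MNM(H)=n$, so K\H{o}nig's theorem gives a permutation $\sigma$ with $h_{i,\sigma(i)}=0$ for all $i$. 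This is the characterization of minimality in \cite[Lemma~3.4.2]{ozsvath2012}: a non-negative matrix in the row–column class is minimal exactly when no further constant can be subtracted from a row or column while keeping non-negativity, and a full zero transversal guarantees this.

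\textbf{Step 2: apply the Euler characteristic formula.} With $H$ minimal and non-negative, \cite[Prop.~3.4.9]{ozsvath2012} gives $\chi(F_H)=n-\theta(H)$, where $\theta$ is the average-of-four-corners sum of \cite[Def.~3.4.8]{ozsvath2012}. The value does not depend on which minimal $H$ is chosen, by \cite[Cor.~3.4.10]{ozsvath2012}.

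\textbf{Step 3: topology and algebra.} Since $G$ represents a knot, $F_H$ is connected, orientable, and has one boundary component. Therefore $\chi(F_H)=2-2g-1=1-2g(G)$. Solving gives
\[
g(G)=\frac{1-\chi(F_H)}{2}=\frac{1-n+\theta(H)}{2},
\]
which is~\eqref{eq:genus}.

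The main obstacle is Step~1, namely confirming that minimality in the sense of \cite{ozsvath2012} is exactly the existence of a zero transversal. I would settle this with a linear programming duality argument. If a non-negative $H$ has a zero permutation $\sigma$, then for any row and column constants $a_i,b_j$ with $h_{ij}-a_i-b_j\ge0$, summing along $\sigma$ gives $\sum a_i+\sum b_j\le0$. So no further nontrivial reduction can decrease the total mass, and $H$ is minimal. One must also make sure the sign conventions of the normalization~\eqref{eq:colnorm} match those of the winding matrix in \cite{ozsvath2012}; only translation by row and column constants is used, so this is a bookkeeping check.
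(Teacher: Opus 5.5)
Your proposal follows the paper's own route: the paper likewise obtains a zero permutation from $\MNM(H)=n$ via K\H{o}nig's theorem, cites \cite[Lemma~3.4.2]{ozsvath2012} for minimality (your summation along $\sigma$ makes this step explicit), and then quotes \cite[Prop.~3.4.9, Cor.~3.4.10]{ozsvath2012} together with $\chi(F_H)=1-2g(G)$. One correction: drop the Step~1 gloss that minimality means ``no single row or column can be lowered''. That is only the Phase~1 condition, which $W^{2}$ in Example~\ref{ex:conway} already satisfies even though it has no zero permutation and $\theta(W^{2})=\theta(W^{3})+2$ (adding $a_i$ to row $i$ and $b_j$ to column $j$ changes $\theta$ by $2(\sum_i a_i+\sum_j b_j)$); the joint-shift, total-mass notion of your last paragraph is the one that makes $\theta(H)$ well defined.
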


%

\begin{example}[Trefoil knot]\label{ex:trefoil}
	The $5\times5$ grid diagram of the trefoil knot $3_1$ ($\sigma_{\mathbb{O}}=\{5,4,3,2,1\}, \sigma_{\mathbb{X}}=\{2,1,5,4,3\}, $) gives the winding
	matrix
	\[
	W(G) =
	\begin{pmatrix}
		0 & 1 &  1 &  0 &0\\
		0 &  1 & 2 &  1&0\\
		0 &  1 &  2 & 2 & 1\\
		0 &0 &1 &1 &1\\
		0 &  0 &  0 &  0 &0
	\end{pmatrix}.
	\]
\noindent\textbf{Phase~1:}	

All column minima and row minima  are $0$, so $W^{2}=W^{1}=W^0=W(G)$.
	
	\noindent\textbf{Phase~2:}

	Zero positions:
	$$
	(1,1),\;(1,4),\;(1,5),\;(2,1),\;(2,5),\;(3,1),\;(4,1),\;(4,2),\;(5,1),\;(5,2),\;(5,3),\;(5,4),\;(5,5).
$$
	Then we get that $\MNM(W^2)$ has the number $5$ (e.g.$R=\{1,4,5\}, C=\{1,5\}$).
	Hence the algorithm terminates immediately with $H=W^{2}$.
	The permutation $(1,4,2,5,3)$ selects the zeros
	$a_{1,4}=a_{2,5}=a_{3,1}=a_{4,2}=a_{5,3}=0$.
	Thus $\theta(H)=\frac{1}{4}(2+10+7+5)$, 
	hence the genus formula then gives
	$g(G)=\frac{1}{2}\theta(H)-\frac{1}{2}(5-1)=1$.
	This is equal to Seifert genus of the trefoil.
	 \hfill $\square$
\end{example}

\begin{example}[Conway knot]\label{ex:conway}
The $11\times 11$ grid diagram of Conway knot

 ($\sigma_\mathbb{O}=\{4,5,10,3,1,11,9,8,7,2,6\}$,$\sigma_\mathbb{X}=\{1,2,4,8,5,7,6,10,3,9,11\}$) gives the winding matrix

\addtocounter{MaxMatrixCols}{10}
$$
W(G)=
\begin{pmatrix}
0 & 0 & 0 & 0 & 0 & 0 & -1 & -1 & -1 & -1 & -1  \\
0 & 0 & 0 & -1 & -1 & -1 & -2 & -2 & -1 & -1 & -1  \\
0 & 0 & 0 & -1 & -1 & -1 & -2 & -3 & -2 & -2 & -1 \\
0 & 0 & 0 & -1 & 0 & 0 & -1 & -2 & -2 & -2 & -1 \\
0 & 0 & 0 & -1 & 0 & 0 & 0 & -1 & -1 & -2 & -1  \\
0 & 0 & 0 & -1 & 0 & 0 & 0 & 0 & 0 & -1 & 0  \\
0 & 0 & -1 & -2 & -1 & 0 & 0 & 0 & 0 & -1 & 0  \\
0 & -1 & -2 & -2 & -1 & 0 & 0 & 0 & 0 & -1 & 0  \\
0 & -1 & -2 & -2 & -2 & -1 & -1 & -1 & -1 & -1 & 0  \\
0 & -1 & -1 & -1 & -1 & 0 & 0 & 0 &  0 & 0 & 0  \\
0 & 0 & 0 & 0 & 0 & 0 & 0 & 0 & 0 & 0 & 0
\end{pmatrix}.
$$

\noindent\textbf{Phase~1:}	
First, get the non-negative matrix:
\addtocounter{MaxMatrixCols}{10}
$$
W^0=
\begin{pmatrix}
0 & 1 & 2 & 2 & 2 & 1 & 1 & 2 & 1 & 1 & 0  \\
0 & 1 & 2 & 1 & 1 & 0 & 0 & 1 & 1 & 1 & 0  \\
0 & 1 & 2 & 1 & 1 & 0 & 0 & 0 & 0 & 0 & 0  \\
0 & 1 & 2 & 1 & 2 & 1 & 1 & 1 & 0 & 0 & 0  \\
0 & 1 & 2 & 1 & 2 & 1 & 2 & 2 & 1 & 0 & 0  \\
0 & 1 & 2 & 1 & 2 & 1 & 2 & 3 & 2 & 1 & 1  \\
0 & 1 & 1 & 0 & 1 & 1 & 2 & 3 & 2 & 1 & 1  \\
0 & 0 & 0 & 0 & 1 & 1 & 2 & 3 & 2 & 1 & 1  \\
0 & 0 & 0 & 0 & 0 & 0 & 1 & 2 & 1 & 1 & 1  \\
0 & 0 & 1 & 1 & 1 & 1 & 2 & 3 & 2 & 2 & 1  \\
0 & 1 & 2 & 2 & 2 & 1 & 2 & 3 & 2 & 2 & 1
\end{pmatrix}.
$$

\noindent\textbf{Phase~2:}
All column minima and row minima are $0$, then  $W^2 = W^1 = W^0$.

\noindent\textbf{Phase~3:}

Zero positions:
\begin{equation*}
\begin{split}
& (1,1),\ (1,11),\ (2,1),\ (2,6),\ (2,7),\ (2,11),\ (3,1),\ (3,6),\ (3,7),\ (3,8),\ (3,9),\ (3,10),\\ 
& (3,11), \ (4,1), \ (4,9),\ (4,10),\ (4,11),\ (5,1),\ (5,10),\ (5,11),\ (6,1),\ (7,1),\ (7,4),\ (8,1),\\ 
& (8,2), \ (8,3),\ (8,4),\ (9,1),\ (9,2),\ (9,3),\ (9,4),\ (9,5),\ (9,6),\ (10,1),\ (10,2),\ (11,1)\ .
\end{split}
\end{equation*}
Then we get $\MNM(W^2)$ has the number $10$ (e.g. $R = \{2,3,4,5,7,8,9,10\}$, $C = \{1,11\}$).
Hence the algorithm continue, and we get $\varepsilon =1$. Thus it is obtained 
$$
W^3=
\begin{pmatrix}
0 & 0 & 1 & 1 & 1 & 0 & 0 & 1 & 0 & 0 & 0  \\
1 & 1 & 2 & 1 & 1 & 0 & 0 & 1 & 1 & 1 & 1  \\
1 & 1 & 2 & 1 & 1 & 0 & 0 & 0 & 0 & 0 & 1  \\
1 & 1 & 2 & 1 & 2 & 1 & 1 & 1 & 0 & 0 & 1  \\
1 & 1 & 2 & 1 & 2 & 1 & 2 & 2 & 1 & 0 & 1  \\
0 & 0 & 1 & 0 & 1 & 0 & 1 & 2 & 1 & 0 & 1  \\
1 & 1 & 1 & 0 & 1 & 1 & 2 & 3 & 2 & 1 & 2  \\
1 & 0 & 0 & 0 & 1 & 1 & 2 & 3 & 2 & 1 & 2  \\
1 & 0 & 0 & 0 & 0 & 0 & 1 & 2 & 1 & 1 & 2  \\
1 & 0 & 1 & 1 & 1 & 1 & 2 & 3 & 2 & 2 & 2  \\
0 & 0 & 1 & 1 & 1 & 0 & 1 & 2 & 1 & 1 & 1
\end{pmatrix}.
$$
Zero positions:
\begin{equation*}
\begin{split}
& (1,1),\ (1,2),\ (1,6),\ (1,7),\ (1,9),\ (1,10),\ (1,11),\ (2,6),\ (2,7),\ (3,6),\ (3,7),\ (3,8),\\
&(3,9),\ (3,10),\
(4,9),\ (4,10),\ (5,10),\ (6,1),\ (6,2),\ (6,4),\ (6,6),\ (6,10),\ (7,4),\ (8,2), \\
& (8,3),\ (8,4),\ (9,2),\ (9,3),\ (9,4),\ (9,5),\ (9,6),\ (10,2),\ (11,1),\ (11,2),\ (11,6)\ .
\end{split}
\end{equation*}
Then we get  $\MNM(W^2)$ has the number $11$ (e.g. $R = \{1,2,3,4,6,8,9,11\}$, $C = \{2,4,10\}$).
Hence the algorithm terminated with $H = W^3$.

The permutation $(1,11,6)(2,7,4,9,5,10)(3,8)$ selects the zeros $a_{1,11} = a_{2,7} = a_{3,8} = a_{4,9} = a_{5,10} = a_{6,1}= a_{7,4} =a_{8,3} = a_{9,5} = a_{10,2} = a_{11,6} = 0$. Thus $\theta(H) = \frac{1}{4}(5+12+7+5+4+9+15+7)$, then we get that 
$$
g(G) = \frac{1}{2}\theta(H) - \frac{1}{2}(11-1) = 3.
$$
 \hfill $\square$
\end{example}

In the first example the algorithm terminates after Phase~1. As shown in Example~\ref{ex:conway}, more complex diagrams require additional iterative updates, but the bounded potential argument and standard augmenting path techniques guarantee efficient eventual termination.

\section{Application 2: Grid moves and the search for minimal genus of knot}\label{sec:gridmoves}

Grid diagrams admit three types of elementary
moves---commutation, stabilization, and
destabilization---coined \emph{grid moves} collectively
(Definition~3.1.8 of~\cite{ozsvath2012}).  Cromwell's
theorem~\cite{cromwell2004} states that two planar grid diagrams
represent equivalent links if and only if one can be transformed
into the other by a finite sequence of grid
moves~\cite{ozsvath2012}.  Consequently, every grid diagram $G$ of
a knot $K$ can be reached from any other diagram of the same knot
by a suitable sequence of commutations, stabilizations and
destabilizations.

Unlike the Seifert genus $g(K)$, which is a knot invariant, the
quantity $g(G)$ computed from a single grid diagram $G$  depends on the particular diagram chosen.
Different grid diagrams representing the same knot may therefore
yield different values of $g(G)$.  The Seifert genus is recovered
as the minimum over all diagrams:
\[
g(K)=\min\{\,g(G)\mid G \text{ is a grid diagram for } K\,\}.
\]

Grid moves provide a mechanism for navigating the space of grid
diagrams.  In particular, destabilization can, in favourable
cases, reduce $g(G)$ and bring one closer to a genus-minimizing
diagram.  An analogous role is played by commutation, which
permutes the markings without changing the grid size and can
prepare a diagram for a subsequent destabilization.  Together,
they open the possibility of a descent algorithm that, starting
from a given grid diagram $G_0$ of a knot $K$, produces a
sequence
\[
G_0 \xrightarrow{\text{comm.}} G_1 \xrightarrow{\text{stab.}} G_2 \xrightarrow{\text{comm.}} \cdots \xrightarrow{\text{destab.}} G_m
\]
with maybe decreasing genus (or equal, or increasing) at each step, and
terminates at a diagram $G_m$ whose genus $g(G_m)$ equals the
Seifert genus $g(K)$.

Understanding how each type of grid move affects the winding
matrix $W(G)$, and therefore the minimal non-negative matrix $H$
and the value $\theta(H)$, is an essential step towards realising
such a descent algorithm.  The analysis of these effects is the
subject of ongoing work and will be presented in a subsequent
paper.  The present contribution provides the algebraic engine
(Algorithm~\ref{alg:main}) that computes $g(G)$ from
$W(G)$ in $\mathcal{O}(n^{3})$ time, which forms a necessary
building block for any move-based optimisation.


This raises a
natural question: given a grid diagram $G$, what features of its
winding matrix $W(G)$ determine whether a particular
stabilization or commutation will increase, decrease, or leave
unchangee genus $g(G)$?  We formulate this problem here and
indicate a promising direction of investigation suggested by the
algebraic machinery of Algorithm~\ref{alg:main}.

Let $G$ be an $n\times n$ grid diagram and $G'$ a stabilization of
$G$, so that $G'$ is an $(n+1)\times(n+1)$ grid diagram.  Write
$H=H(G)$ and $H'=H(G')$ for the minimal non-negative matrices
produced by Algorithm~\ref{alg:main}.  The genus formula
 gives
\[
g(G)=\frac12\,\theta(H)-\frac{n-1}{2},
\qquad
g(G')=\frac12\,\theta(H')-\frac{n}{2}.
\]
Their difference is therefore
\begin{equation}\label{eq:gengap}
	g(G')-g(G) = \frac12\bigl(\theta(H')-\theta(H)-1\bigr).
\end{equation}
Thus $g$ increases by $k$ if and only if
$\theta(H')-\theta(H)=2k+1$; it stays the same precisely when
$\theta(H')=\theta(H)+1$.

Because $\theta(H)$ is a sum of local averages of the entries of
$H$ around each $X$ and $O$ marking, the
change $\theta(H')-\theta(H)$ is controlled by how the new row and
column inserted during stabilization affect these local averages.
For a destabilization---the inverse operation---the sign is
reversed, and one obtains a descent move precisely when
$\theta(H)-\theta(H') > 1$.

We now consider the $O:NW$ case as an example. Let $G$ be a grid diagram with grid number $n$. 
After performing an $O:NW$ stabilization on $G$ at an $O$-marking $(n-i+1,\; j-1)$, denote by $G'$ the resulting
grid diagram, which has grid number $n+1$,as shown in Figure~\ref{fig:ONW}.

\begin{figure}[H]
	\centering
	
\begin{tikzpicture}[x=0.75pt,y=0.75pt,yscale=-1,xscale=1]

\draw    (76.43,321.32) -- (201.33,321.32) ;
\draw    (76.08,398.57) -- (162.72,398.57) -- (200.98,398.57) ;
\draw    (76.43,375.42) -- (90.74,375.42) -- (201.33,375.42) ;
\draw    (76.43,451.67) -- (201.33,451.67) ;
\draw    (201.33,321.32) -- (201.33,451.67) ;
\draw    (128.19,321.65) -- (128.19,383.53) -- (128.19,452) ;
\draw    (76.43,321.32) -- (76.43,451.67) ;
\draw    (151.55,320.32) -- (151.55,450.67) ;
\draw    (316.83,317.71) -- (473.61,317.71) ;
\draw    (316.89,419.99) -- (473.67,419.99) ;
\draw    (316.83,398.31) -- (473.61,398.31) ;
\draw    (316.83,465.86) -- (473.61,465.86) ;
\draw    (473.61,317.71) -- (473.61,465.86) ;
\draw    (366.32,318.05) -- (366.32,388.38) -- (366.32,466.19) ;
\draw    (316.83,317.71) -- (316.83,465.86) ;
\draw    (395.22,317.71) -- (395.22,465.86) ;
\draw    (421.78,318.19) -- (421.78,466.33) ;
\draw    (316.98,375.85) -- (358.21,375.85) -- (473.76,375.85) ;
\draw    (208.41,379.95) -- (225.33,380.23) ;
\draw [shift={(227.33,380.26)}, rotate = 180.93] [color={rgb, 255:red, 0; green, 0; blue, 0 }  ][line width=0.75]    (10.93,-3.29) .. controls (6.95,-1.4) and (3.31,-0.3) .. (0,0) .. controls (3.31,0.3) and (6.95,1.4) .. (10.93,3.29)   ;

\draw (131.53,378.6) node [anchor=north west][inner sep=0.75pt]   [align=left] {O};
\draw (401.14,378.59) node [anchor=north west][inner sep=0.75pt]   [align=left] {O};
\draw (85.5,380.39) node [anchor=north west][inner sep=0.75pt]   [align=left] {X};
\draw (132.79,426.66) node [anchor=north west][inner sep=0.75pt]   [align=left] {X};
\draw (373.66,401.07) node [anchor=north west][inner sep=0.75pt]   [align=left] {O};
\draw (326.37,379.04) node [anchor=north west][inner sep=0.75pt]   [align=left] {X};
\draw (375.35,438.1) node [anchor=north west][inner sep=0.75pt]   [align=left] {X};
\draw (400.59,400.35) node [anchor=north west][inner sep=0.75pt]   [align=left] {X};
\draw (125.79,297.11) node [anchor=north west][inner sep=0.75pt]    {$j-1$};
\draw (12.36,378.62) node [anchor=north west][inner sep=0.75pt]    {$n-i+1$};
\draw (249.96,400.04) node [anchor=north west][inner sep=0.75pt]    {$n-i+1$};
\draw (249.96,380) node [anchor=north west][inner sep=0.75pt]    {$n-i+2$};
\draw (359.68,292.98) node [anchor=north west][inner sep=0.75pt]    {$j-1$};
\draw (405.29,293.46) node [anchor=north west][inner sep=0.75pt]    {$j$};

\end{tikzpicture}

	\caption{$O:NW$}
	\label{fig:ONW}
\end{figure}

The following example is about the knot $5^1$. We can see that the genus increase $1$ after the stabilization O:NW which  occur at a given O-marking $(2,2)$.
\begin{example}
The $7\times 7$ grid diagram of knot $5^1$ ($\sigma_\mathbb{O}=\{1,2,3,4,5,6,7\}$, $\sigma_\mathbb{X}=\{5,6,1,2,7,3,4\}$) givens the following winding matrix
$$
W(G)=
\begin{pmatrix}
0 & 0 & 0 & 0 & 0 & 1 & 1   \\
0 & 0 & 1 & 1 & 1 & 2 & 1  \\
0 & 1 & 2 & 2 & 2 & 2 & 1 \\
0 & 1 & 2 & 2 & 1 & 1 & 0 \\
0 & 1 & 2 & 1 & 0 & 0 & 0  \\
0 & 1 & 1 & 0 & 0 & 0 & 0  \\
0 & 0 & 0 & 0 & 0 & 0 & 0 
\end{pmatrix}.
$$
We will see that  $W^2 = W^1 = W^0=W(G)$. Since $\MNM(W^2)=7$,
 $\theta(H) = \frac{1}{4}(5+10+21+7)$, and we get the associated genus
$$
g(G) = \frac{1}{2}\theta(H) - \frac{1}{2}(7-1) = 2.
$$

After the stabilization O:NW which  occur at a given O-marking $(2,2)$, we get a $8\times 8$ grid diagram ($\sigma_\mathbb{O}=\{1,2,3,4,5,6,7,8\}$, $\sigma_\mathbb{X}=\{6,7,2,1,3,8,4,5\}$), and its winding matrix is 
$$
W(G')=
\begin{pmatrix}
0 & 0 & 0 & 0 & 0 & 0 & 1 & 1   \\
0 & 0 & 1 & 1 & 1 & 1 & 2 & 1  \\
0 & 1 & 2 & 2 & 2 & 2 & 2 & 1 \\
0 & 1 & 2 & 2 & 2 & 1 & 1 & 0 \\
0 & 1 & 2 & 2 & 1 & 0 & 0 & 0  \\
0 & 1 & 2 & 1 & 0 & 0 & 0 & 0  \\
0 & 1 & 1 & 1 & 0 & 0 & 0 & 0  \\
0 & 0 & 0 & 0 & 0 & 0 & 0 & 0 
\end{pmatrix}.
$$
It is easy to see $\MNM(W^2)=8$ under the algorithm \ref{alg:main}, then $\theta(H') = \frac{1}{4}(2+10+28+7+5)$, and we get the associated genus
\[g(G') = \frac{1}{2}\theta(H') - \frac{1}{2}(8-1) = 3.\]
 \hfill $\square$
\end{example}
\subsection*{Acknowledgements}
	The author would like to thank Xiang Liu and Ziyi Liu at Hebei Normal University for discussions, and Xiaowei Pang for her encouragement. The project was funded by Science and Technology Project of Hebei Education Department  (Grant No.~QN2023030), Science Foundation of Hebei Normal University (Grant Nos.~L2022B02, L2025ZD04).
\subsection*{Data availability}
No data was used for the research described in the article.

\subsection*{Declaration}
\begin{itemize}
\item Python was used for the implementation of the algorithm.
\item AI was used for typesetting and grammar.
\end{itemize}

\bibliographystyle{alpha}
\bibliography{references}

\end{document}